\documentclass{amsart}
\usepackage{amsmath,amssymb,amsthm,url,xcolor}
\sloppy

\newtheorem{thr}{Theorem}

\newtheorem{obs}[thr]{Observation}
\newtheorem{claim}[thr]{Claim}

\theoremstyle{definition}

\theoremstyle{remark}

\numberwithin{equation}{section}

\def\E{\mathcal{E}}

\def\E{\mathcal{E}}

\begin{document}

\title{Counterexamples to Hedetniemi's conjecture}

\author{Yaroslav Shitov}
\email{yaroslav-shitov@yandex.ru}







\begin{abstract}
The chromatic number of $G\times H$ can be less than the minimum of the chromatic numbers of finite simple graphs $G$ and $H$.
\end{abstract}

\maketitle

The \textit{tensor product} $G\times H$ of finite simple graphs $G$ and $H$ has vertex set $V(G)\times V(H)$, and pairs $(g,h)$ and $(g',h')$ are adjacent if and only if $\{g,g'\}\in E(G)$ and $\{h,h'\}\in E(H)$. One can easily see that $\chi(G\times H)\leqslant\chi(G)$ because a proper coloring $\Psi$ of the graph $G$ can be lifted to the coloring $(g, h)\to\Psi(g)$ of $G\times H$. Similarly, a proper coloring of $H$ leads to a proper coloring of $G\times H$ with the same number of colors, so we get
$$\chi(G\times H)\leqslant \min\{\chi(G), \chi(H)\}.\eqno{\mathrm{(H)}}$$

The classical conjecture of S. T. Hedetniemi~\cite{Hedetniemi} posited the equality for all $G$ and $H$. More than 50 years have passed since the conjecture appeared, and it keeps attracting serious attention of researchers working in graph theory and combinatorics; we mention four exhaustive survey papers~\cite{Klavzar, Sau, Tard, Zhu} for more detailed information on the topic. Here, we briefly recall that Hedetniemi's conjecture was proved in many special cases, including graphs with chromatic number at most four~\cite{EZS}, graphs containing large cliques~\cite{BEL, DSW, Wel}, circular graphs and products of cycles~\cite{VPV}, and Kneser graphs and hypergraphs~\cite{HM}. The conjecture gave an impetus to the study of \textit{multiplicative graphs}, which remains remarkably active and important in its own right~\cite{HHMNL, Tard2, Wrochna}. A generalization of Hedetniemi's conjecture to fractional chromatic numbers turned out to be true~\cite{Zhu2}, but the version with directed graphs is false~\cite{PR}, as well as the one with infinite chromatic numbers~\cite{Haj, Rin}. We show that the inequality~(H) can be strict for finite simple graphs.

\bigskip


A standard tool in the study of Hedetniemi's conjecture is the concept of the \textit{exponential graph} as introduced in~\cite{EZS}. Let $c$ be a positive integer, and let $\Gamma$ be a finite graph that we allow to contain loops; the graph $\E_c(\Gamma)$ has all mappings $V(\Gamma)\to\{1,\ldots,c\}$ as vertices, and two distinct mappings $\varphi,\psi$ are adjacent if, and only if, the condition $\varphi(x)\neq\psi(y)$ holds whenever $\{x,y\}\in E(\Gamma)$. The relevance of $\E_c(\Gamma)$ to the problem is easy to see because the graph $\Gamma\times\E_c(\Gamma)$ has the proper $c$-coloring $(h,\psi)\to\psi(h)$. The idea of our approach lies in the fact that the proper $c$-colorings of $\E_c(\Gamma)$ become quite well-behaved if the graph $\Gamma$ is fixed and $c$ gets large; let us proceed to technical details and exact statements.
A basic result in~\cite{EZS} tells that the constant mappings form a $c$-clique in $\E_c(\Gamma)$, which means that these mappings get different colors in a proper $c$-coloring. So a relabeling of colors can turn any proper $c$-coloring $\Psi:\E_c(\Gamma)\to\{1,\ldots,c\}$ into a \textit{suited} one, in which a color $i$ is assigned to the constant mapping sending every vertex of $\Gamma$ to $i$.

\begin{obs}\label{obs1}
If $\Psi$ is a suited proper $c$-coloring of $\E_c(\Gamma)$, then $\Psi(\varphi)\in\operatorname{Im}\varphi$.
\end{obs}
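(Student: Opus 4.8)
The plan is to argue by contradiction, using the constant mappings that anchor a suited coloring. Suppose some vertex $\varphi$ of $\E_c(\Gamma)$ receives a color $i=\Psi(\varphi)$ that does \emph{not} lie in $\operatorname{Im}\varphi$, so that $\varphi(x)\neq i$ for every $x\in V(\Gamma)$. I would then compare $\varphi$ with the constant mapping $\kappa_i$ defined by $\kappa_i(x)=i$ for all $x$, which by the definition of a suited coloring satisfies $\Psi(\kappa_i)=i$. The whole point is to show that $\varphi$ and $\kappa_i$ are adjacent in $\E_c(\Gamma)$; once this is established, they are two adjacent vertices carrying the same color $i$, contradicting the properness of $\Psi$, and the observation follows at once.

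To verify adjacency I would check the two defining requirements. First, $\varphi\neq\kappa_i$: since $V(\Gamma)$ is nonempty and $\varphi$ never takes the value $i$ whereas $\kappa_i$ takes it everywhere, the two mappings differ at every vertex. Second, the edge condition $\varphi(x)\neq\kappa_i(y)$ for every edge $\{x,y\}\in E(\Gamma)$: here $\kappa_i(y)=i$, so the requirement reduces to $\varphi(x)\neq i$, which holds for \emph{all} $x$ precisely because $i\notin\operatorname{Im}\varphi$. The symmetric inequality $\varphi(y)\neq\kappa_i(x)=i$ holds for the same reason. Hence $\varphi$ and $\kappa_i$ meet both conditions and are adjacent, completing the contradiction.

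There is not much of a genuine obstacle here; the argument is short, and its only delicate point is making sure the reduction of the edge condition to $\varphi(x)\neq i$ survives the presence of loops, which the paper explicitly allows. If $\Gamma$ carries a loop at a vertex $v$, then $\{v,v\}\in E(\Gamma)$ imposes the extra constraint $\varphi(v)\neq\kappa_i(v)=i$, but this too is guaranteed by $i\notin\operatorname{Im}\varphi$, so loops cause no trouble. The degenerate case $V(\Gamma)=\varnothing$ can be set aside, since there the empty mapping has empty image and the statement is vacuous. In every other case the contradiction goes through, proving $\Psi(\varphi)\in\operatorname{Im}\varphi$.
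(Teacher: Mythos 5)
Your argument is correct and is exactly the paper's proof: the paper likewise notes that $\varphi$ is adjacent to the constant mapping with value $j$ for any $j\notin\operatorname{Im}\varphi$, and since a suited coloring assigns color $j$ to that constant mapping, properness forbids $\Psi(\varphi)=j$. You have merely spelled out the adjacency check (including the loop case) in more detail than the one-line original.
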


\begin{proof}
The mapping $\varphi$ is adjacent to the constant mapping $\{v\to j\}$ for any $j$ not in $\operatorname{Im}\varphi$, so $\varphi$ cannot get colored with such a $j$.
\end{proof}


\begin{claim}\label{lem3}
Consider a graph $\Gamma$ with $|V(G)|=n$ and a suited proper $c$-coloring $\Psi$ of $\E_c(\Gamma)$. Then there is a vertex $v=v(\Psi)$ of $\Gamma$ such that $\geqslant c-\sqrt[n]{n^3c^{n-1}}$ color classes $\Psi^{-1}(b)$ are $v$\textit{-robust}, which means that, for any $\varphi_b\in\Psi^{-1}(b)$, there is a $w\in N(v)$ satisfying $\varphi_b(w)=b$, where $N(v)$ stands for the closed neighborhood of $v$ in $\Gamma$.
\end{claim}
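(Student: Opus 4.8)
The plan is to exploit all vertices of $\Gamma$ at once rather than to hunt for a good $v$ directly. For each $v\in V(\Gamma)$ let $B_v$ denote the set of colors that are \emph{not} $v$-robust, so that the number of $v$-robust classes equals $c-|B_v|$ and the assertion becomes the existence of a $v$ with $|B_v|\leqslant\sqrt[n]{n^3c^{n-1}}$. Unwinding the definition, a color $b$ lies in $B_v$ precisely when some $\varphi\in\Psi^{-1}(b)$ misses the value $b$ on the entire closed neighborhood $N(v)$; by Observation~\ref{obs1} such a $\varphi$ nevertheless attains $b$ somewhere, so $\varphi^{-1}(b)$ is a nonempty set disjoint from $N(v)$. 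If $B_v=\varnothing$ for some $v$ there is nothing to prove, so we may assume $|B_v|\geqslant 1$ for every $v$.

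Since the minimum of nonnegative reals never exceeds their geometric mean,
\[
\min_{v\in V(\Gamma)}|B_v|\ \leqslant\ \Bigl(\prod_{v\in V(\Gamma)}|B_v|\Bigr)^{1/n},
\]
and it therefore suffices to prove the product estimate $\prod_{v}|B_v|\leqslant n^3c^{\,n-1}$. This product has a transparent meaning: it counts the mappings $\delta\colon V(\Gamma)\to\{1,\ldots,c\}$ for which $\delta(v)$ is a non-$v$-robust color \emph{simultaneously for every} $v$. Writing $\D$ for the set of these \emph{transversal} mappings, the whole problem reduces to the single inequality $|\D|\leqslant n^3c^{\,n-1}$.

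To bound $|\D|$ I would build an injection from $\D$ into the product of the set of mappings on a fixed $(n-1)$-element subset of $V(\Gamma)$, of size $c^{\,n-1}$, with a label set of size $n^3$. The handles for a transversal $\delta$ are its own color $\Psi(\delta)$ and a vertex $v_0$ with $\delta(v_0)=\Psi(\delta)$; since this color is non-$v_0$-robust, some witness $\varphi\in\Psi^{-1}(\Psi(\delta))$ has $\varphi^{-1}(\Psi(\delta))$ nonempty and disjoint from $N(v_0)$. Independence of the class makes $\delta$ and $\varphi$ non-adjacent, which forces an edge $\{x,y\}$ of $\Gamma$ with $\delta(x)=\varphi(y)$. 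The idea is to combine Observation~\ref{obs1}, this coincidence, and the neighborhood-escape condition on $\varphi$ so as to recover $\delta$ from its values on all but one vertex, after naming only the three vertices $v_0,x,y$, each costing a factor of at most $n$.

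The hard part is precisely this reconstruction: keeping the encoding injective while holding the overhead to the three vertex-labels that yield $n^3$, rather than spending a factor $n$ on every coordinate, which would give a useless bound. This is where the rigidity of a proper coloring must be exploited to the hilt—each non-adjacency forced by independence eliminates a degree of freedom, and the escape condition confines where a non-robust value can appear—so the crux is to verify that, once the few auxiliary vertices are fixed, these global constraints leave at most $c^{\,n-1}$ candidates for $\delta$. With the product estimate secured, the geometric-mean inequality above immediately produces a vertex $v$ with at most $\sqrt[n]{n^3c^{\,n-1}}$ non-robust classes, as required.
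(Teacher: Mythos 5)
Your reduction is sound and in fact mirrors the skeleton of the paper's own argument: the paper also attaches to each vertex $w$ a set of ``bad'' colors (there, the colors $b$ for which the class $I(w,b)=\{\varphi\in\Psi^{-1}(b):\varphi(w)=b\}$ is not large, a set containing your $B_w$), forms exactly the product family of mappings you call transversals, and gets the claim from the bound $n^3c^{n-1}$ on their number. The geometric-mean step, the identification of $\prod_v|B_v|$ with $|\D|$, and the use of Observation~\ref{obs1} to produce a vertex $v_0$ with $\delta(v_0)=\Psi(\delta)\in B_{v_0}$ are all correct. The problem is that the one inequality everything rests on, $|\D|\leqslant n^3c^{n-1}$, is announced rather than proved: you yourself call the reconstruction ``the hard part'' and end by saying it remains ``to verify'' that the constraints leave at most $c^{n-1}$ candidates. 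As written this is a plan, not a proof. The missing lemma is precisely: \emph{if $b$ is not $v_0$-robust, then $|I(v_0,b)|\leqslant n^2c^{n-2}$}. The paper proves the contrapositive: if $|I(u,b)|>n^2c^{n-2}$, then for any $\varphi_b\in\Psi^{-1}(b)$ the mappings $\psi\in I(u,b)$ taking a value of $\operatorname{Im}\varphi_b$ at some $u'\neq u$ number at most $n\cdot n\cdot c^{n-2}$, so some $\psi\in I(u,b)$ has $\operatorname{Im}\varphi_b\cap\operatorname{Im}\psi=\{b\}$ and $\psi^{-1}(b)=\{u\}$; non-adjacency of $\varphi_b$ and $\psi$ (same color class) forces an edge $\{x,y\}$ with $\varphi_b(x)=\psi(y)$, the common value must be $b$, hence $y=u$ and $\varphi_b(x)=b$ with $x\in N(u)$, i.e.\ $b$ is $u$-robust. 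The transversals are then covered by the at most $nc$ small classes $I(u,b)$, giving $|\D|\leqslant nc\cdot n^2c^{n-2}=n^3c^{n-1}$.

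If you want to complete your encoding instead, three points must be supplied. First, the witness $\varphi$ must be fixed once per pair $(v_0,b)$, so that the decoder can reconstitute it from data it actually possesses. Second, you must check that $x\neq v_0$, since otherwise $b=\delta(v_0)$ is exactly the erased coordinate and the decoder cannot identify $\varphi$; this does follow from the escape condition, because $x=v_0$ would give $\varphi(y)=\delta(v_0)=b$ with $y\in N(v_0)$, contradicting $\varphi^{-1}(b)\cap N(v_0)=\varnothing$. Third, the degenerate case $\delta=\varphi_{v_0,b}$ produces no coincidence edge; these at most $nc$ exceptional transversals push your count to $n^3c^{n-1}+nc$ unless handled separately, which would weaken the stated bound. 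None of these obstacles is fatal, but none of them is addressed in your write-up, and they constitute the entire quantitative content of the claim.
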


\begin{proof}
For any color $b$ and any vertex $u\in V(\Gamma)$, we define $I(u,b)$ as the set of all $\varphi\in \Psi^{-1}(b)$ that satisfy $\varphi(u)=b$. According to Observation~\ref{obs1}, every vertex of $\E_c(\Gamma)$ belongs to at least one of the classes $I(u,b)$.

Assume that $I(u,b)$ is a \textit{large} class, that is, it contains more than $n^2c^{n-2}$ elements, and consider an arbitrary mapping $\varphi_b\in\Psi^{-1}(b)$. If every element $\psi_{ub}$ of $I(u,b)$ admitted a vertex $u'\neq u$ with $\psi_{ub}(u')\in\operatorname{Im}\varphi_b$, then there would be at most $n^2$ ways to choose $u'$ and $\psi_{ub}(u')$, while the remaining $n-2$ vertices would contribute at most a factor of $c^{n-2}$. This contradicts the cardinality assumption on $I(u,b)$, so we can actually find a $\psi_{ub}\in I(u,b)$ under which $u$ is an only vertex taking the color $b$ and also $\operatorname{Im}\varphi_b\cap\operatorname{Im}\psi_{ub}=\{b\}$. In other words, the equality $\Psi(\varphi)=b$ cannot hold unless there is a vertex $w\in N(u)$ satisfying $\varphi(w)=b$.



If there is a vertex $v\in V(\Gamma)$ for which $I(v,b)$ is large for at least $c-\sqrt[n]{n^3c^{n-1}}$ colors $b$, then we are done. Conversely, we can define more than $n^3c^{n-1}$ mappings $\varphi: V(\Gamma)\to\{1,\ldots,c\}$ for which the value of $\varphi$ on a vertex $w$ does not equal those colors $b$ for which $I(w,b)$ is large. None of these mappings belongs to a large class $I(u,b)$, but the non-large classes are too small to cover all of them.
\end{proof}

\bigskip

Now we are ready to proceed with counterexamples. For a simple graph $G$, we define the graph $\Gamma_G$ by adding the loops to all the vertices, and the \textit{strong product} $G\boxtimes K_q$ as the graph with vertex set $V(G)\times\{1,\ldots,q\}$ and edges between $(u,i)$ and $(v,j)$ when, and only when, $\{u,v\}\in E(G)$ or $(u=v)\&(i\neq j)$.

\begin{claim}\label{claim4}
Let $G$ be a finite simple graph with finite girth $\geqslant 6$. Then, for sufficiently large $q$, one has $\chi\left(\E_c(G\boxtimes K_q)\right)>c$ with $c=\lceil 3.1q\rceil$.
\end{claim}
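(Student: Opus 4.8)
The plan is to assume, for contradiction, that $\E_c(G\boxtimes K_q)$ admits a proper $c$-coloring; after relabeling colors we may take it to be a suited coloring $\Psi$. The first move is to pass to the subgraph induced by the \emph{fiber-constant} mappings, i.e.\ those $\varphi$ for which $\varphi(u,i)$ is independent of $i$. A direct check of the adjacency rule shows that two fiber-constant mappings $\varphi,\psi$ are adjacent in $\E_c(G\boxtimes K_q)$ exactly when the induced maps $\bar\varphi,\bar\psi\colon V(G)\to\{1,\dots,c\}$ satisfy $\bar\varphi(x)\neq\bar\psi(y)$ for all $x\in N(y)$; that is, the fiber-constant mappings span a copy of $\E_c(\Gamma_G)$. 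Restricting $\Psi$ to this copy yields a suited proper $c$-coloring of $\E_c(\Gamma_G)$, to which Claim~\ref{lem3} applies with $n=|V(G)|$ \emph{fixed} while $c=\lceil 3.1q\rceil\to\infty$.

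Since $n$ is a constant, the quantity $\sqrt[n]{n^3c^{n-1}}=n^{3/n}c^{1-1/n}$ is $o(c)$, so Claim~\ref{lem3} supplies a single vertex $v\in V(G)$ for which all but $o(c)$ of the colors are $v$-robust. I would moreover retain from the proof of Claim~\ref{lem3} the \emph{concentrated witnesses}: for each $v$-robust color $b$ the mapping $\psi_{vb}$ produced there is colored $b$ and takes the value $b$ on the fiber over $v$ and nowhere else. These witnesses are the tool that lets robustness be exported from fiber-constant mappings to mappings that differ from a fiber-constant one only on the single fiber $F_v=\{v\}\times\{1,\dots,q\}$.

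The heart of the argument is then local to $F_v$ and its neighboring fibers, and this is where the hypothesis $\mathrm{girth}(G)\geqslant 6$ enters. Triangle-freeness makes the neighbors $w_1,\dots,w_d$ of $v$ pairwise non-adjacent, and the absence of $4$- and $5$-cycles makes their neighborhoods meet only in $v$; consequently the fibers $F_{w_1},\dots,F_{w_d}$ are attached to $F_v$ in a tree-like fashion, with no edges among themselves and disjoint onward neighborhoods. This is exactly what is needed to guarantee, using the witnesses $\psi_{vb}$, that a mapping $\varphi$ which is fiber-constant off $F_v$ can be made adjacent to a $b$-colored mapping for every robust color $b$ outside $\varphi(F_v)$; hence the color of such a $\varphi$ is forced to lie in $\varphi(F_v)$ together with the $o(c)$ exceptional colors. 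In other words, each such mapping must spend its color inside the $q$-element set $\varphi(F_v)$.

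With this localized form of robustness in hand, the contradiction comes from a counting step that pits the $q$ degrees of freedom on $F_v$ against the $c\approx 3.1q$ colors that a proper coloring must realize: the mappings that deviate from a fixed background only on $F_v$ carry, through the rook-type constraint that the exponential-graph adjacency imposes on $F_v$, more color demand than the available palette can supply once $c<3.1q$, with $3.1$ being precisely the threshold at which the balance tips. I expect the genuine difficulty to lie in this last step: robustness is only literally available for the small-image (fiber-constant) mappings, so the delicate point is to restrict to a family of controlled-image mappings that still probes the full $q$-dimensional freedom of $F_v$, to use the girth hypothesis to keep the neighboring fibers from interfering, and to optimize the resulting count so that the threshold falls below, rather than above, the value $3.1q$.
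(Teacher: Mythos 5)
Your first paragraph matches the paper's opening move exactly: restrict the suited coloring to the fiber-constant mappings, identify that induced subgraph with $\E_c(\Gamma_G)$, and invoke Claim~\ref{lem3} with $n=|V(G)|$ fixed so that all but $o(c)$ colors are $v$-robust for a single vertex $v$. From that point on, however, the proposal does not contain a proof. The entire contradiction is deferred to a ``counting step'' that you yourself flag as the genuine difficulty and never carry out; the phrases ``rook-type constraint,'' ``more color demand than the available palette can supply,'' and ``optimize the resulting count'' describe a hoped-for argument, not an argument. Moreover, the direction you gesture at --- exporting robustness to mappings that deviate from a fiber-constant background only on the fiber $F_v$, and then counting such mappings --- is not what makes the proof work, and your diagnosis of the constant $3.1$ as the tipping point of that count is not substantiated by anything in the proposal.

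The missing idea is an explicit clique, not a count. The paper defines $\mathcal{M}=\{\mu_{q+1},\dots,\mu_c\}$ by $\mu_t(g,i)=i$ on fibers at distance $0$ or $2$ from $v$, $\mu_t(g,i)=q+i$ on fibers at distance $1$, and $\mu_t(g,i)=t$ on fibers at distance $\geqslant 3$; the girth hypothesis is used precisely to verify that these $c-q$ mappings are pairwise adjacent (no two vertices receiving equal values in the first two clauses can be adjacent when $G$ has no cycle of length $\leqslant 5$). Since each $\mu_t$ has image contained in $\{1,\dots,2q\}\cup\{t\}$ and $c-q\geqslant 2.1q>2q$, the pigeonhole principle yields a $\tau$ with $\Lambda(\mu_\tau)\notin\{1,\dots,2q\}$, whence $\Lambda(\mu_\tau)=\tau$ by Observation~\ref{obs1} --- this, and not any optimized count over $F_v$, is where $c>3q$ enters. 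One then picks a $v$-robust color $\sigma\notin\{1,\dots,2q,\tau\}$ and defines the \emph{fiber-constant} mapping $\nu$ equal to $\tau$ on the fibers over $N(v)$ and to $\sigma$ elsewhere; robustness of $\sigma$ forbids $\Lambda(\nu)=\sigma$, Observation~\ref{obs1} forces $\Lambda(\nu)\in\{\sigma,\tau\}$, so $\Lambda(\nu)=\tau=\Lambda(\mu_\tau)$ while $\nu$ and $\mu_\tau$ are adjacent --- a contradiction. Note that $\nu$ is itself fiber-constant, so no ``export'' of robustness beyond $\E_c(\Gamma_G)$ is ever needed; the concentrated witnesses $\psi_{vb}$ you propose to retain play no role outside the proof of Claim~\ref{lem3}. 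As it stands, your proposal sets the stage correctly but omits the construction that actually produces the contradiction.
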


\begin{proof}
The restriction of a suited proper coloring $\Lambda:\E_c(G\boxtimes K_q)\to\{1,\ldots,c\}$ to the mappings that are constant on the cliques $\{g\}\times K_q\subset G\boxtimes K_q$ is a proper coloring $\Psi:\E_c(\Gamma_G)\to\{1,\ldots,c\}$ up to the identification of every such clique with $g$. We find a vertex $v=v\left(\Psi\right)\in V(G)$  as in Claim~\ref{lem3} and define the clique $\mathcal{M}=\{\mu_{q+1},\ldots,\mu_c\}$ in $\E_c(G\boxtimes K_q)$ by setting, for all $i\in\{1,\ldots,q\}$ and $t\in\{q+1,\ldots,c\}$,

\smallskip

\noindent (1.1) $\mu_t(g, i)=i$ for all $g\in V(G)$ satisfying $\operatorname{dist}(v,g)\in\{0,2\}$,

\noindent (1.2) $\mu_t(g, i)=q+i$ for all $g\in V(G)$ satisfying $\operatorname{dist}(v,g)=1$,

\noindent (1.3) $\mu_t(g,i)=t$ for all $g\in V(G)$ satisfying $\operatorname{dist}(v,g)\geqslant 3$.

\smallskip

Due to the assumption on the girth of $G$, no pair of vertices defined in (1.1) and (1.2) can be adjacent in $G\boxtimes K_q$ and monochromatic at the same time; the condition (1.3) uses different colors for different $t$, and these colors are also different from those of the neighboring vertices dealt with in (1.1). Therefore, $\mathcal{M}$ is indeed a clique and requires $c-q\geqslant 2.1q$ colors. Using the pigeonhole principle, one finds a $\tau\in\{q+1,\ldots,c\}$ such that $\Lambda(\mu_\tau)\notin\{1,\ldots,2q\}$, and due to Observation~\ref{obs1} we have $\tau=\Lambda(\mu_\tau)$. Further, it is only $o(q)$ classes that are not $v$-robust with respect to $\Psi$ in the terminology of Claim~\ref{lem3}, so we can find a $v$-robust class $\sigma\notin\{1,\ldots,2q,\tau\}$. 
Finally, we note that the mapping $\nu:G\boxtimes K_q\to\{1,\ldots,c\}$ defined as, for all $i$,

\smallskip

\noindent (2.1) $\nu(g,i)=\tau$ for all $g\in V(G)$ in the closed neighborhood $N(v)$,

\noindent (2.2) $\nu(g,i)=\sigma$ for all $g\in V(G)$ satisfying $\operatorname{dist}(v,g)\geqslant 2$,

\smallskip 

\noindent is adjacent to $\mu_\tau$ in $\E_c(G\boxtimes K_q)$. Since $\sigma$ is $v$-robust, we cannot have $\Lambda(\nu)=\sigma$ by Lemma~\ref{lem3}, but rather we have $\Lambda(\nu)=\tau$ according to Observation~\ref{obs1}. So we have $\Lambda(\nu)=\Lambda(\mu_\tau)$, which is a contradiction.
\end{proof}

The classical paper~\cite{Erd} proves the existence of graphs with arbitrarily large girth and fractional chromatic number; so we can find a graph $G$ of girth at least $6$ that satisfies $\chi_f(G)>3.1$. We set $c=\lceil 3.1q\rceil$ and pass to sufficiently large $q$; we immediately get $\chi(G\boxtimes K_q)\geqslant q\cdot\chi_f(G)>c$ and also $\chi\left(\E_c(G\boxtimes K_q)\right)>c$ by Claim~\ref{claim4}. The equality $\chi\left((G\boxtimes K_q)\times \E_c(G\boxtimes K_q)\right)=c$ follows by standard theory~\cite{EZS} as the mapping $(u,\varphi)\to\varphi(u)$ is a proper $c$-coloring of any graph of the form $\Gamma\times \E_c(\Gamma)$.

\end{document}